\documentclass{amsart}
\def\ack{\subsubsection*{Acknowledgement.}}%
\usepackage{amsmath,amsfonts,amsthm,amssymb,verbatim}
\newtheorem{theo}{Theorem}
\newtheorem{lemma}{Lemma}[section]
\newtheorem*{thma}{Theorem A}
\numberwithin{equation}{section}
\def \isnatural {\in\mathbb{N}}
\def \isreal {\in\mathbb{R}}
\def \iscomplex {\in\mathbb{C}}
\def \toinfty {\rightarrow\infty}
\newcommand{\tef}{transcendental entire function}
\newcommand\qfor{\quad\text{for }}
\begin{document}
%
%
\title[Simply connected fast escaping Fatou components]{Simply connected fast escaping Fatou components}
\author{D. J. Sixsmith}
\address{Department of Mathematics and Statistics \\
	 The Open University \\
   Walton Hall\\
   Milton Keynes MK7 6AA\\
   UK}
\email{david.sixsmith@open.ac.uk}
%
%
\begin{abstract}
We give an example of a {\tef} with a simply connected fast escaping Fatou component, but with no multiply connected Fatou components. We also give a new criterion for points to be in the fast escaping set.
\end{abstract}
\maketitle
%
%
%
\section{Introduction}
Suppose that $f:\mathbb{C}\rightarrow\mathbb{C}$ is a {\tef}. The \itshape Fatou set \normalfont $F(f)$ is defined as the set of points $z\iscomplex$ such that $(f^n)_{n\isnatural}$ is a normal family in a neighborhood of $z$. The \itshape Julia set \normalfont $J(f)$ is the complement of $F(f)$. An introduction to the properties of these sets was given in \cite{MR1216719}.

The \itshape fast escaping set \normalfont $A(f)$ was introduced in \cite{MR1684251}. We use the definition $$A(f) = \{z : \text{there exists } \ell \isnatural \text{ such that } |f^{n+\ell}(z)| \geq M^n(R, f), \text{ for } n \isnatural\},$$  given in \cite{fast}. Here, the \itshape maximum modulus function \normalfont $M(r, f) = \max_{|z|=r} |f(z)|,$ for $r > 0,$ $M^n(r, f)$ denotes repeated iteration of $M(r,f)$ with respect to $r$, and $R > 0$ can be taken to be any value such that $M(r, f) > r$, for $r \geq R$.  We write $M(r)$ when it is clear from the context which function is being considered.

Suppose that $U=U_0$ is a component of $F(f)$. If $U \cap A(f)\ne\emptyset$, then $\overline{U} \subset A(f)$ \cite[Theorem 1.2]{fast}. We call a Fatou component in $A(f)$ \itshape fast escaping\normalfont. Denote by $U_n$ the component of $F(f)$ containing $f^n(U)$. We say that a component is \itshape wandering \normalfont if $U_n = U_m$ implies that $n=m$. All fast escaping Fatou components are wandering; \cite[Lemma 4]{MR1684251} and \cite[Corollary 4.2]{fast}.

For a {\tef}, all multiply connected Fatou components are fast escaping; \cite[Theorem 2]{MR2117213} and \cite[Theorem 4.4]{fast}. The first example of a {\tef} with a multiply connected Fatou component was constructed by Baker in \cite{MR0153842}. Other examples are found in, for example, \cite{MR796748}, \cite{MRunknown}, \cite{2011arXiv1109.1794B} and \cite{MR2458806}.

The only known example of a simply connected fast escaping Fatou component was given by Bergweiler \cite{MRunknown}, using a quasi-conformal surgery technique from \cite{MR2458806}. This function also has multiply connected Fatou components. In fact, in \cite{MRunknown}, the properties of the multiply connected Fatou components are used to show that the simply connected Fatou components are fast escaping.

This prompts the question of whether a {\tef} can have simply connected fast escaping Fatou components without having multiply connected Fatou components. We answer this in the affirmative, using a direct construction and a recent result on the size of multiply connected Fatou components \cite[Theorem 1.2]{2011arXiv1109.1794B} to prove the following.
\begin{theo}
\label{T1}
There is a {\tef} with a simply connected fast escaping Fatou component, and no multiply connected Fatou components.
\end{theo}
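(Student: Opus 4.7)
The plan is to construct $f$ explicitly as a {\tef} of the form $f(z)=\prod_{n=1}^{\infty}(1-z/a_n)$ (or a minor variant using Weierstrass primary factors), where $(a_n)$ is a rapidly increasing sequence of positive reals chosen inductively. The aim is to simultaneously engineer (i) a specific orbit that escapes at least as fast as the iterated maximum modulus, producing a simply connected fast escaping Fatou component, and (ii) sufficient control on the distribution of the zeros to let the size estimates of \cite[Theorem 1.2]{2011arXiv1109.1794B} rule out every multiply connected Fatou component.

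For the simply connected fast escaping component, I would pick the $a_n$ inductively so that some chosen point $z_0$ has its orbit $z_{n+1}=f(z_n)$ satisfying a lower bound comparable to $M^n(R,f)$, and so that a round disc $D_n=\{z:|z-z_n|<r_n\}$ is sent strictly inside $D_{n+1}$ by $f$. This gives normality of $(f^n)_{n\isnatural}$ on $D_0$, so $D_0$ lies in a Fatou component $U$ with $\overline{U}\subset A(f)$. The ``new criterion for points to be in the fast escaping set'' advertised in the abstract is, I expect, used precisely here: it should provide a condition on the orbit of $z_0$ that is easier to verify during the inductive construction than the direct inequality $|f^n(z_0)|\ge M^n(R,f)$. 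Univalence of $f$ on each $D_n$ then keeps $U$ simply connected.

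To exclude multiply connected Fatou components, I would invoke \cite[Theorem 1.2]{2011arXiv1109.1794B}, which should provide a lower bound on some annular modulus contained in any multiply connected Fatou component, expressed in terms of iterates of $M$. By arranging the gaps in the sequence $(a_n)$ to be very wide, so that $f$ is univalent on large annuli surrounding the circles $|z|=a_n$ and $|f|$ is suitably controlled on them, any hypothetical multiply connected component would be forced to contain round annuli incompatible with the construction. Since all multiply connected Fatou components are fast escaping, and we have controlled where fast escaping orbits can live, the non-existence follows.

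The main obstacle is the tension between the two demands. Achieving fast escape at the rate $M^n(R,f)$ requires $f$ to grow extremely quickly along the chosen orbit, and it is precisely such rapid growth that typically generates the annular Fatou components one wants to rule out. Reconciling these via a careful inductive choice of $(a_n)$—with the new criterion used to keep the fast-escape verification manageable, and the size estimates used as a rigid upper obstruction—is where the technical work will lie.
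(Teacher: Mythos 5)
The central difficulty here is the one you flag at the end but do not resolve, and your proposed mechanism for resolving it points in the wrong direction. You suggest ruling out multiply connected Fatou components ``by arranging the gaps in the sequence $(a_n)$ to be very wide''. That is precisely the classical Baker mechanism for \emph{creating} multiply connected components: on a wide annulus omitting all zeros, $f$ behaves like $cz^N$ and maps large round annuli into even larger ones, which by Montel's theorem then lie in multiply connected wandering components. Indeed the paper's first, naive candidate $g(z)=z\prod(1+z/a_k)^2$ with rapidly increasing $a_k$ has exactly this defect. The correct use of \cite[Theorem 1.2]{2011arXiv1109.1794B} is the opposite of what you propose: that theorem says the iterated images of any multiply connected component must \emph{contain} annuli of the form $A(r^{1-\alpha},r^{1+\alpha})$, so one defeats it by packing in enough extra zeros that every annulus $A(r,r^c)$, $c>1$, at large scales contains a zero of $f$; since $f(0)=0$ makes $0$ a fixed point, no zero can lie in a (wandering, fast escaping) multiply connected component, and a contradiction results. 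Concretely the paper inserts zeros of modulus $a_n^{\mu_{n,l}}$ with $\mu_n=n^{3/n}\to 1$, so consecutive zero moduli $r_k<r_{k+1}$ satisfy $\log r_{k+1}/\log r_k\to 1$. Your product $\prod(1-z/a_n)$ with sparse simple zeros cannot work, and the tension you correctly identify (fast growth along the orbit versus no annular components) is resolved by zero placement, not by growth control on annuli.

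On the simply connected component your outline is closer to the paper, but two key ingredients are missing. First, the discs $D_n$ must be centred at points $-b_n$ chosen very close to critical points of $f$: the local degree there is enormous (about $n^3$), so $f$ is far from univalent on $D_n$, and it is the near-vanishing of the linear coefficient (the cancellation in the paper's estimate of $\eta_1$) that keeps $f(D_n)$ inside $D_{n+1}$; simple connectivity of the component is then free from the absence of multiply connected components, not from univalence. Second, the fast escaping verification works because $-b_n$ and $b_n$ have (almost) the same modulus and all zeros lie on the negative axis, so $|f(-b_n)|/M(b_n)=|f(-b_n)|/f(b_n)$ is a product of factors $\bigl((a-x)/(a+x)\bigr)^2$ that is bounded below by $1/(16n^6)$; the new criterion (Theorem~\ref{T2}) is then exactly what converts the orbit inequality $|f^{m}(x')|\geq\eta^m(x)$ with $\eta(r)=\epsilon(r)M(r)$, $\epsilon(r)\asymp n^{-6}$, into membership of $A(f)$. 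You correctly anticipated the role of the new criterion, but without the critical-point and symmetric-modulus devices the inductive construction you describe cannot be carried out.
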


To prove Theorem~\ref{T1} we require a new sufficient condition for points to be in $A(f)$, which may be of independent interest.
\begin{theo}
\label{T2}
Suppose that $f$ is a {\tef}. Suppose also that, for $R_0>0$, $\epsilon:[R_0, \infty)\to(0,1)$ is a nonincreasing function, such that
\begin{equation}
\label{T2constraint}
\epsilon(M^n(r)) \geq \epsilon(r)^{n+1}, \qfor r \geq R_0 \text{ and } n\isnatural.
\end{equation}
Define $\eta(r) = \epsilon(r) M(r)$, for $r \geq R_0$. Then there exists $R_1 \geq R_0$ such that
\begin{equation*}
A(f) = \{z : \text{ there exists } \ell \isnatural \text{ such that } |f^{n+\ell}(z)|\geq \eta^n(R'), \text{ for } n\isnatural \},
\end{equation*}
for $R' \geq R_1$.
\end{theo}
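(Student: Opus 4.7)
The plan is to prove the two inclusions separately.

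The inclusion $A(f) \subseteq \{z : \exists \ell \isnatural \text{ s.t. } |f^{n+\ell}(z)| \geq \eta^n(R'), \text{ for } n \isnatural\}$ is immediate: since $\epsilon < 1$, we have $\eta(r) \leq M(r)$ for $r \geq R_0$, so by induction $\eta^n(R') \leq M^n(R')$. Hence the $A(f)$-defining condition (applied with $R = R'$) implies the $\eta$-condition.

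For the reverse, the plan is to establish, for some fixed $R_*$ and all $R_1$ sufficiently large, the pointwise lower bound
\[
\eta^n(R_1) \geq 2 M^n(R_*), \qfor n \isnatural.
\]
Given this, any $z$ satisfying $|f^{n+\ell}(z)| \geq \eta^n(R_1)$ for all $n$ automatically satisfies $|f^{n+\ell}(z)| \geq M^n(R_*)$, placing $z$ in $A(f)$. The statement for arbitrary $R' \geq R_1$ then follows by monotonicity: $\eta^n(R') \geq \eta^n(R_1)$, so the $R'$-condition is stronger than the $R_1$-condition, while the trivial inclusion goes the other way.

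The lower bound is proved by induction on $n$. The base case amounts to $R_1 \geq 2 R_*$. For the inductive step, the recursion $\eta^{n+1}(R_1) = \epsilon(\eta^n(R_1)) M(\eta^n(R_1))$, together with $\eta^n(R_1) \leq M^n(R_1)$, the monotonicity of $\epsilon$, and constraint \eqref{T2constraint}, gives $\epsilon(\eta^n(R_1)) \geq \epsilon(M^n(R_1)) \geq \epsilon(R_1)^{n+1}$. Combining with the inductive hypothesis and a standard growth estimate $M(2r) \geq M(r)^2$ valid for large $r$ (for any transcendental entire function, by convexity of $\log M(e^x)$ together with $\log M(r)/\log r \to \infty$), one obtains $\eta^{n+1}(R_1) \geq \epsilon(R_1)^{n+1} M^{n+1}(R_*)^2$. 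The step closes as long as $\epsilon(R_1)^{n+1} M^{n+1}(R_*) \geq 2$, i.e., $M^{n+1}(R_*) \geq 2 (1/\epsilon(R_1))^{n+1}$. Since $M(r)/r \to \infty$, once $R_*$ is taken large enough the iterates $M^n(R_*)$ grow at least geometrically in $n$ with any desired ratio, easily absorbing the right side.

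The hard part will be coordinating the thresholds—choosing $R_*$ and $R_1$ so that the Rippon--Stallard growth lemma, the geometric growth of $M^n(R_*)$, the constraint \eqref{T2constraint}, and the base case $R_1 \geq 2 R_*$ all take effect simultaneously. Once this book-keeping is handled, the rest of the argument is routine.
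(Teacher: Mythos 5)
There is a genuine gap at the heart of your inductive step: the ``standard growth estimate $M(2r)\geq M(r)^2$'' is false for a general transcendental entire function. Writing $\phi(t)=\log M(e^t)$, the inequality asks that $\phi(t+\log 2)\geq 2\phi(t)$, i.e.\ that the increment of $\phi$ over an interval of fixed length $\log 2$ exceed $\phi(t)$ itself; convexity of $\phi$ plus $\phi(t)/t\to\infty$ only gives $\phi(t+\log 2)-\phi(t)\to\infty$, which is far weaker. For any function of order zero with, say, $\log M(r)\asymp(\log r)^2$ one has $\log M(2r)=\log M(r)+O(\log r)\ll 2\log M(r)$, so $M(2r)\ll M(r)^2$. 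This is not a pathological exclusion: the function constructed in this very paper satisfies $\log M(r,f)=O((\log r)^2)$ (see the Remark at the end of Section~\ref{Sdef}), so your lemma fails exactly where the theorem is applied. Without it, the inductive step $\eta^{n+1}(R_1)\geq\epsilon(R_1)^{n+1}M^{n+1}(R_*)^2$ collapses, and the factor $\epsilon(R_1)^{n+1}$ cannot be absorbed. The correct tool is the Rippon--Stallard estimate $M(\rho^c)\geq M(\rho)^c$ for $c>1$ and large $\rho$, which controls $M$ under a \emph{power} of the argument rather than a constant multiple; the paper applies it with $\rho=\epsilon(r)R$ and $c=\log R/\log(\epsilon(r)R)$ to get $M(R)\geq\epsilon(r)^{-3}M(\epsilon(r)R)$, and then runs the induction in the form $\eta^k(r)\geq\epsilon(r)^{-k-1}M^k(\epsilon(r)r)$, i.e.\ it compares $\eta^k$ with iterates of $M$ started from the slightly \emph{smaller} point $\epsilon(r)r$ and lets the accumulated factor $\epsilon(r)^{-k-1}$ pay for pushing the argument back up at each stage. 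You would need to restructure your induction along these lines.

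Two secondary points. First, your reduction of the case $R'\geq R_1$ to the case $R'=R_1$ via ``$\eta^n(R')\geq\eta^n(R_1)$'' assumes $\eta$ is nondecreasing, which does not follow from the hypotheses ($\epsilon$ is only nonincreasing and could drop sharply); the paper avoids this by proving its key inequality for all $r\geq R_1$ simultaneously, and you should do the same. Second, even granting your growth lemma, the closing condition $M^{n+1}(R_*)\geq 2\epsilon(R_1)^{-(n+1)}$ forces $R_*$ to depend on $\epsilon(R_1)$, while the base case needs $R_1\geq 2R_*$; this circularity is probably resolvable for a fixed $R_1$ but needs to be spelled out. The first, easy inclusion $A(f)\subseteq A'(f)$ in your proposal is fine and matches the paper.
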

Note that this is a generalisation of \cite[Theorem 2.7]{fast}, which is obtained from Theorem~\ref{T2} when $\epsilon$ is constant. 
%
%
%
%
%
\section{The definition of the function}
\label{Sdef}
In this section we define a {\tef}, $f$, which has all the properties defined in Theorem~\ref{T1}. Since $f$ is very complicated, we first outline informally the construction of $f$, starting with simpler functions which only have some of these properties. We then give the full construction. A detailed proof of Theorem~\ref{T1} is given in subsequent sections.

Consider first a {\tef} defined by a power series;
\begin{equation*}
g(z) = z \prod_{k=1}^\infty \left(1 + \frac{z}{a_k}\right)^2, \quad 0<a_1<a_2<\cdots.
\end{equation*}
The sequence $(a_n)_{n\isnatural}$ can be chosen so that the following holds: we can define another sequence, $(b_n)_{n\isnatural}$, such that $b_n$ is approximately equal to $a_n$, $-b_n$ is close to a critical point of $g$, and $g(-b_n)$ is close to $-b_{n+1}$. It can then be shown that a small disc centred at $-b_n$ is mapped by $g$ into a small disc centred at $-b_{n+1}$. By Montel's theorem, these discs must be in the Fatou set of $g$. Moreover, these discs cannot be in multiply connected Fatou components of $g$ since, by \cite[Theorem 1.2]{2011arXiv1109.1794B}, any open set contained in a multiply connected Fatou component of $g$ must, after a finite number of iterations of $g$, cover an annulus surrounding the origin. Finally, it can be shown, by comparing $|g(-b_n)|$ to $M(b_n, g) = g(b_n)$, that these discs are contained in fast escaping Fatou components of $g$.

However, $g$ does not have all the properties defined in Theorem~\ref{T1}. In particular, by considering the behaviour of $g$ in large annuli which omit the zeros of $g$, it can be shown that $g$ has multiply connected Fatou components. Thus $g$ has very similar properties to the example in \cite{MRunknown}.

We note that no zero of $g$ can be in a multiply connected Fatou component, since 0 is a fixed point. In order to prevent the existence of multiply connected Fatou components, we add further zeros to the function, along the negative real axis. This requires some care. The addition of too many zeros -- for example, spaced linearly along the negative real axis -- leads to a breakdown of other parts of the construction. The addition of a zero with modulus insufficiently distant from $a_n$ leads to a similar breakdown.

We use \cite[Theorem 1.2]{2011arXiv1109.1794B} to show that only a relatively small number of additional zeros are required. In particular, suppose that $h$ is a {\tef} with $h(0)=0$ and with zeros of modulus $r_0 < r_1 < r_2 < \cdots$. Then, by \cite[Theorem 1.2]{2011arXiv1109.1794B}, $h$ has no multiply connected Fatou components if $\lim_{k\rightarrow\infty} \log r_{k+1}/\log r_{k}$ exists and is equal to $1$.

To use this result, we need to understand the behaviour of $\log a_{n+1} / \log a_{n}$, for large $n$. From the recursive definition that we use to ensure that the sequences $(a_n)_{n\isnatural}$ and $(b_n)_{n\isnatural}$ have the required properties, (see (\ref{thean})), we find that, for large $n$, $\log a_{n+1}/\log a_{n}$ is close to $n^3$. See (\ref{mainidentity}) for a precise statement of how the term $n^3$ arises here.

This suggests the following. Define $\mu_n = n^{3/n}$, for $n\isnatural$. To simplify some displays we set $\mu_{n,m} = \mu_n^m$, and observe that $\mu_{n,0} = 1$ and $\mu_{n,n} = n^3$, for $n\isnatural$. We now define a more complicated {\tef};
\begin{equation*}
h(z) = z \prod_{k=1}^\infty\prod_{l=0}^{k-1}\left(1+\frac{z}{a_k^{\mu_{k,l}}}\right)^2, \quad 0<a_1<a_2<\cdots.
\end{equation*}
The sequence $(a_n)_{n\isnatural}$ in this definition is not the same as in the definition of $g$, but serves the same purpose, and is chosen similarly. This function has zeros of modulus $$\cdots, a_n,~a_n^{\mu_n},~a_n^{\mu_{n,2}},~\cdots,~a_n^{\mu_{n,n-1}}, a_{n+1}, ~a_{n+1}^{\mu_{n+1}} \cdots.$$

Since it is readily seen that $\mu_n\rightarrow 1$ as $n\rightarrow\infty$, this function does not have multiply connected Fatou components. However, two further adjustments are required. Firstly, the zero of modulus $a_n^{\mu_{n,n-1}}$ is sufficiently close to the zero of modulus $a_{n+1}$ that the original construction breaks down. We resolve this by omitting this zero. Secondly, the value of $\log a_{n+1}/\log a_{n}$ is not close enough to $n^3$, for large $n$, to ensure that $\lim_{k\rightarrow\infty} \log a_{k+1}/\log a_k^{\mu_{k,k-2}} = 1.$ We resolve this by adding one additional zero, which serves no other purpose in the construction. This zero is defined using two additional sequences, $(\alpha_n)_{n\isnatural}$ and $(\beta_n)_{n\isnatural}$, which we choose to keep $\log a_{n+1}/\log a_{n}$ sufficiently close to $n^3$.

Now we are able to indicate the form of the function $f$ in Theorem~\ref{T1}. 
Let $f$ be the {\tef};
\begin{equation}
\label{thefun}
f(z) = z \prod_{k=3}^\infty \left\{ \left(1+\frac{z}{a_k^{\beta_k}}\right)^{2\alpha_{k}} \ \prod_{l=0}^{k-2}\left(1+\frac{z}{a_k^{\mu_{k,l}}}\right)^2 \right\},
\end{equation}
where $0<a_3<a_4<\cdots, \ \alpha_n\in\{0,1,2,\ldots\}, \ \beta_n\isreal,$ for $n\isnatural$. Again, the sequence $(a_n)_{n\isnatural}$ in this definition is not the same as that in the definition of $g$ or $h$, but serves the same purpose, and is chosen similarly. The related sequence $(b_n)_{n\isnatural}$, discussed after the definition of $g$, is defined for $f$ by (\ref{thebn}). The sequences $(\alpha_n)_{n\isnatural}$ and $(\beta_n)_{n\isnatural}$ are the two sequences mentioned at the end of the previous paragraph. \\

The structure of the proof of Theorem~\ref{T1} is as follows. First, in Section~\ref{S1}, we give the definition of the various sequences in (\ref{thefun}), and we prove a number of estimates on the modulus of the zeros of $f$. Next, in Section~\ref{S2}, we show that there are no multiply connected Fatou components of $f$. In Section~\ref{S3} we show that there are intervals on the negative real axis each  contained in some Fatou component of $f$. Finally, in Section~\ref{S4} we prove Theorem~\ref{T2} and then use this to show that these Fatou components of $f$ are fast escaping. It is clear that Theorem~\ref{T1} follows from these results. \\

%
%
\itshape Remark: \normalfont
Rippon and Stallard asked \cite[Question 1]{fast} if there can be unbounded fast escaping Fatou components of a {\tef}. It can be shown that the Fatou components of the function $f$ are all bounded. Indeed, it is straightforward to prove that the number of zeros of $f$ in the disc $\{z: |z| < r\}$ is $O(\log r)$, and hence that $\log M(r, f) = O((\log r)^2)$. It follows that $f$ has no unbounded Fatou components \cite[Theorem 1.9(b)]{fast}, and, moreover, that the set $A(f)$ has a structure known as a spider's web. For more details we refer to \cite{fast}.
%
%
\section{Defining the sequences}
\label{S1}
In this section we first define the sequences $(\alpha_n)_{n\isnatural}$ and $(\beta_n)_{n\isnatural}$, and then define the sequences $(a_n)_{n\isnatural}$ and $(b_n)_{n\isnatural}$. \\

Recall from Section~\ref{Sdef} that $\mu_n = n^{3/n}$ and $\mu_{n,m} = \mu_n^m$; we also define, for $n\geq 3$,
\begin{equation}
\label{sigmadef}
\sigma_n = \sum_{l=1}^{n-2} \mu_{n,l} = \mu_n \frac{\mu_{n,n-2} - 1}{\mu_n - 1}.
\end{equation}
%
%
We define $(\alpha_n)_{n\isnatural}$ to be a sequence of integers and $(\beta_n)_{n\isnatural}$ to be a sequence of real numbers. Assume that $N_0$ is even and chosen sufficiently large for subsequent estimates to hold. Set 
\begin{equation}
\label{thealphan}
 2\alpha_n =
  \begin{cases}
   0, &\text{for } n < N_0, \\
   N_0^3 + 2N_0^2 + 6N_0 + 2, &\text{for } n = N_0, \\
   3n^2 + n + 6, &\text{for } n > N_0, \ n \text{ even}, \\
   3n^2 + n + 4, &\text{for } n > N_0, \ n \text{ odd}.
  \end{cases}
\end{equation}
Note that $\alpha_n$ is an integer, for $n\isnatural$. Set
\[
 \beta_n =
  \begin{cases}
   0,                                       &\text{for } n < N_0, \\
   \frac{1}{\alpha_n}(n^4 - \sigma_n),      &\text{for } n \geq N_0, \ n \text{ even}, \\
   \frac{1}{\alpha_n}(\frac{n^3(2n-1)}{2} - \sigma_n), &\text{for } n \geq N_0, \ n \text{ odd}.
  \end{cases}
\]
We observe that these choices imply that
\begin{equation}
\label{taudef}
\tau_n = \frac{2}{n^3}\left(\alpha_{n}\beta_{n} + \sigma_n\right)
\end{equation}
satisfies
\begin{equation}
\label{thetaun}
\tau_n =
  \begin{cases}
   2n,     &\text{for } n \geq N_0, \ n \text{ even}, \\
   2n - 1, &\text{for } n \geq N_0, \ n \text{ odd},
  \end{cases}
\end{equation}
and
\begin{equation}
\label{alphaeq}
2\alpha_n = 3n^2 + n + 3 + \tau_n - \tau_{n-1},  \qfor n > N_0.
\end{equation}
%
%
We also define a sequence of integers $(T_n)_{n\isnatural}$ by
\begin{equation}
\label{theTn}
T_n =
  \begin{cases}
   n^3 + 2n - 3, &\text{for } n \text{ even}, \\
   n^3 + 2n - 2, &\text{for } n \text{ odd}.
  \end{cases}
\end{equation}
%
%
Next we prove a result which gives various relationships between these sequences.
\begin{lemma}
\label{lemx}
The following all hold for the choice of sequences above:
\begin{equation}
\label{alphabeta}
\alpha_n \sim \frac{3}{2}n^2, \quad \beta_n \sim \frac{2}{3}n^2, \quad \text{as } n\rightarrow\infty,
\end{equation}
\begin{equation}
\label{alphadef}
2 \sum_{k=3}^{n} \alpha_k  = n^3 + 2n^2 + 4n + 2 + \tau_n, \qfor n\geq N_0,
\end{equation}
\begin{equation}
\label{mainidentity}
1 + 2 \sum_{k=3}^{n-1} \alpha_{k} + \sum_{k=3}^{n-1} \sum_{l=0}^{k-2} 2 = n^3 + \tau_{n-1} = T_n, \qfor n\geq N_0,
\end{equation} 
and
\begin{equation}
\label{muthing}
\mu_{n,2} < \beta_n < \mu_{n,n-3}, \qfor\text{large } n.
\end{equation}
\end{lemma}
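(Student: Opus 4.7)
The proof is essentially bookkeeping from the definitions, and the natural order is to do (\ref{alphabeta}) first, then (\ref{alphadef}), then (\ref{mainidentity}) using (\ref{alphadef}), and finally (\ref{muthing}) using (\ref{alphabeta}). For the asymptotics in (\ref{alphabeta}), the claim $\alpha_n \sim \frac{3}{2}n^2$ can be read off directly from (\ref{thealphan}). For $\beta_n$ the point is to estimate $\sigma_n$: writing $\mu_n = n^{3/n} = 1 + \frac{3\log n}{n}(1+o(1))$, I get $\mu_{n,n-2} = n^{3 - 6/n} \sim n^3$ and $\mu_n - 1 \sim \frac{3\log n}{n}$, so the closed form (\ref{sigmadef}) gives $\sigma_n = O(n^4/\log n) = o(n^4)$. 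In both the even and odd branches of the definition of $\beta_n$, the numerator is $n^4(1+o(1))$, so dividing by $\alpha_n \sim \frac{3}{2}n^2$ yields $\beta_n \sim \frac{2}{3}n^2$.

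I would prove (\ref{alphadef}) by induction on $n \geq N_0$. The base case is a direct substitution, using that $\alpha_k = 0$ for $k < N_0$ and that $\tau_{N_0} = 2N_0$ because $N_0$ is even; both sides then evaluate to $N_0^3 + 2N_0^2 + 6N_0 + 2$. For the inductive step, the increment of the right-hand side as $n$ advances to $n+1$ works out to $3n^2 + 7n + 7 + \tau_{n+1} - \tau_n$, which is exactly $2\alpha_{n+1}$ by (\ref{alphaeq}). For (\ref{mainidentity}), the double sum collapses via $\sum_{k=3}^{n-1} 2(k-1) = n^2 - 3n$. Combining this with (\ref{alphadef}) applied at index $n-1$, the $n^2$, $n$, and constant contributions all cancel and leave $n^3 + \tau_{n-1}$. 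Splitting into cases by the parity of $n$ and reading off $\tau_{n-1}$ from (\ref{thetaun}), a direct comparison with (\ref{theTn}) verifies $n^3 + \tau_{n-1} = T_n$ in each case.

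For (\ref{muthing}) I would observe that $\mu_{n,2} = n^{6/n} \to 1$ while $\mu_{n,n-3} = n^{3 - 9/n} \sim n^3$, and that by part (\ref{alphabeta}) we have $\beta_n \sim \frac{2}{3}n^2$, which tends to infinity but is dominated by $n^3$; both inequalities therefore hold for all sufficiently large $n$. The only genuine computation is the asymptotic estimate of $\sigma_n$ used in (\ref{alphabeta}); everything else is induction, a telescoping sum, and a parity case-check.
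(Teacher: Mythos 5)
Your proposal is correct and follows essentially the same route as the paper: the same $\sigma_n = O(n^4/\log n)$ estimate (via (\ref{logeq})/(\ref{muineq})) for the $\beta_n$ asymptotic, the same induction using (\ref{alphaeq}) for (\ref{alphadef}), and the same derivation of (\ref{mainidentity}) and (\ref{muthing}) from the earlier parts. (Note that both your argument and the paper's establish (\ref{mainidentity}) only for $n \geq N_0+1$, since (\ref{alphadef}) is invoked at index $n-1$; this is harmless, as the identity is only ever used for large $n$.)
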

\begin{proof}
The first half of (\ref{alphabeta}) is immediate from (\ref{thealphan}). Now, by (\ref{thealphan}), (\ref{taudef}) and (\ref{thetaun}),
\begin{align}
\label{betasim}
\beta_n \sim \frac{2n}{3}\left(n - \frac{\sigma_n}{n^3}\right), \quad \text{ as } n\toinfty.
\end{align}
We have that
\begin{equation}
\label{logeq}
\frac{x}{2} \leq \log(1+x) \leq x, \qfor 0<x<\frac{1}{2}.
\end{equation}
Putting $x = \mu_n - 1$, we obtain
\begin{equation}
\label{muineq}
\frac{3}{n} \log n \leq \mu_n - 1 \leq \frac{6}{n} \log n, \qfor \text{large } n.
\end{equation}
Hence, by (\ref{sigmadef}),$$\frac{\sigma_n}{n^3} = \frac{\mu_n}{\mu_{n,n}}\frac{\mu_{n,n-2} - 1}{\mu_n - 1} \sim \frac{1}{\mu_n(\mu_n - 1)} = O\left(\frac{n}{\log n}\right) \quad \text{ as } n\toinfty,$$ and the second half of (\ref{alphabeta}) follows by (\ref{betasim}).

We can see that (\ref{alphadef}) holds by induction. For, it is immediately satisfied when $n=N_0$. When $n=m>N_0$ we have, by (\ref{alphaeq}) and (\ref{alphadef}) with $n=m-1$,
\begin{align*}
2\sum_{k=3}^{m} \alpha_k  &= 2\alpha_m + 2 \sum_{k=3}^{m-1} \alpha_k 
                           = m^3 + 2m^2 + 4m + 2 + \tau_m.
\end{align*}
Finally (\ref{mainidentity}) follows from (\ref{thetaun}), (\ref{theTn}), and (\ref{alphadef}), and (\ref{muthing}) follows from (\ref{alphabeta}).
\end{proof}
%
%
Next we define the sequence $(a_n)_{n\isnatural}$ recursively, and for each $n\isnatural$ put
\begin{equation}
\label{thebn}
b_n = a_n - \frac{2}{T_n+2} a_n = \frac{T_n}{T_n+2} a_n.
\end{equation}
Choose $a_3$ and $N_1$ large, and set $a_{n+1} = a_n^{n^3}$, for $3 \leq n < N_1$. We assume that $a_3$ and $N_1$ are chosen sufficiently large for various estimates in the sequel to hold. For $n \geq N_1$, we define
\begin{equation}
\label{thean}
a_{n+1} = \frac{(T_{n+1} + 2)}{T_{n+1}} \ b_n \left(1 - \frac{b_n}{a_n}\right)^2 \prod_{k=3}^{n-1} \left\{\left(1-\frac{b_n}{a_k^{\beta_k}}\right)^{2\alpha_{k}} \ \prod_{l=0}^{k-2}\left(1-\frac{b_n}{a_k^{\mu_{k,l}}}\right)^2 \right\}.
\end{equation}

%
%
Finally in this section we prove a set of inequalities which concern the growth of the sequence $(a_n)$, and the ratios of these numbers to the modulus of the other zeros of $f$. Note that (\ref{alphabeta}) and (\ref{aapprox2}) imply that the product in (\ref{thefun}) is locally uniformly convergent in $\mathbb{C}$.
\begin{lemma}
\label{lem1}
The following inequalities hold for the sequence $(a_n)$ defined above. For $n \geq 3$,
\begin{align}
\label{aapprox}
&a_n^{n^3-2/n} \leq a_{n+1} \leq a_n^{n^3+2/n}, \\
\label{aapprox2}
&a_n > \exp(e^n),
\end{align}
and, for large $n$,
\begin{align}
\label{i1}
&\frac{a_n}{a_n^{\mu_n}} \leq \exp(-e^{n/2}), \quad &\frac{\alpha_n a_n}{a_n^{\beta_n}} \leq \exp(-e^{n/2}), \\
\label{i1y}
&\frac{a_n^{\mu_{n,n-2}}}{a_{n+1}} \leq \exp(-e^{n/2}), \quad &\frac{\alpha_n a_n^{\beta_n}}{a_{n+1}} \leq \exp(-e^{n/2}).
\end{align}
\end{lemma}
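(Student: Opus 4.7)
The plan is to establish (\ref{aapprox}) and (\ref{aapprox2}) simultaneously by induction on $n$, and then deduce (\ref{i1}) and (\ref{i1y}) as consequences. For $3 \le n < N_1$ the recursion $a_{n+1} = a_n^{n^3}$ makes (\ref{aapprox}) trivial, while choosing $a_3$ large enough that $\log a_3 \ge e^3$ propagates (\ref{aapprox2}) since $(n^3 - 2/n)e^n \ge e^{n+1}$ for $n$ moderately large.

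The content is the inductive step at $n \ge N_1$, where $a_{n+1}$ is given by (\ref{thean}). Taking logarithms and writing $S_n$ for the logarithm of the big product, one has
\begin{equation*}
\log a_{n+1} = \log\tfrac{T_{n+1}+2}{T_{n+1}} + \log b_n + 2\log\tfrac{2}{T_n+2} + S_n,
\end{equation*}
where the three prefactors contribute only $O(\log n)$. The inductive hypotheses force $a_k^{\mu_{k,l}}, a_k^{\beta_k} \ll b_n$ for every $3 \le k \le n-1$ and every admissible $l$, so every factor linearises as
\begin{equation*}
\log\bigl|1 - b_n/a_k^\gamma\bigr| = \log b_n - \gamma\log a_k + O(a_k^\gamma/b_n).
\end{equation*}
The sum of the $O$-errors is controlled by the case $n\to n-1$ of (\ref{i1y}), so is of order $\exp(-e^{(n-1)/2})$ and is negligible. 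Using (\ref{mainidentity}) to sum the coefficient $2\alpha_k + 2(k-1)$ of $\log b_n$, and (\ref{taudef}) to rewrite $2\alpha_k\beta_k + 2(1+\sigma_k) = k^3\tau_k + 2$, I obtain
\begin{equation*}
\log a_{n+1} = (n^3 + \tau_{n-1})\log b_n - \sum_{k=3}^{n-1}(k^3\tau_k + 2)\log a_k + O(\log n).
\end{equation*}

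The decisive step is the near-cancellation of the large $\tau_{n-1}\log a_n$ contribution. Since $\log b_n = \log a_n + O(n^{-3})$, and the inductive instance of (\ref{aapprox}) at stage $n-1$ gives $(n-1)^3\log a_{n-1} = \log a_n\bigl(1 + O(n^{-4})\bigr)$, the $k=n-1$ summand matches $\tau_{n-1}\log a_n$ up to an error of order $\log a_n/n^3$. Iterating the inductive estimate shows that $\log a_k$ decays in $n-k$ at the rate of iterated cubes, so the remaining tail $\sum_{k=3}^{n-2}(k^3\tau_k + 2)\log a_k$ is dominated by its $k=n-2$ term and is $O(\log a_n/n^2)$. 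Collecting everything yields $|\log a_{n+1} - n^3\log a_n| \le (2/n)\log a_n$ for $n$ sufficiently large, closing the induction for (\ref{aapprox}); the inductive step for (\ref{aapprox2}) is then immediate. I expect this accounting to be the main obstacle: the choice of exponents in Lemma \ref{lemx} is precisely calibrated to cancel the leading $\tau_{n-1}\log a_n$, and one must verify that no sub-leading term exceeds the $2/n$ budget.

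The rapid-decay estimates follow directly. For (\ref{i1}), (\ref{muineq}) gives $\mu_n - 1 \ge (3\log n)/n$, so $a_n/a_n^{\mu_n} = \exp(-(\mu_n - 1)\log a_n) \le \exp(-(3\log n / n)\,e^n) \le \exp(-e^{n/2})$ for large $n$; the second half follows from $\beta_n \sim (2/3)n^2$ via (\ref{alphabeta}), so $(\beta_n - 1)\log a_n$ easily dominates $\log\alpha_n = O(\log n)$. For (\ref{i1y}), combining the lower bound $\log a_{n+1} \ge (n^3 - 2/n)\log a_n$ with $n^3 - \mu_{n,n-2} = n^3\bigl(1 - n^{-6/n}\bigr) \sim 6n^2\log n$ gives $\log(a_n^{\mu_{n,n-2}}/a_{n+1}) \le -(6n^2\log n - O(1))\,e^n$, far below $-e^{n/2}$; the analogous bound on $\alpha_n a_n^{\beta_n}/a_{n+1}$ uses $n^3 - \beta_n \sim n^3$ and is even easier.
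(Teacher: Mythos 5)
Your proposal is correct and follows essentially the same route as the paper: the same induction on (\ref{aapprox}) with (\ref{aapprox2}), (\ref{i1}) and (\ref{i1y}) derived from it, the same use of (\ref{mainidentity}) and (\ref{taudef}) to identify the total exponent $T_n$ of $b_n$ and to collapse the exponent sums, the same isolation of the $k=n-1$ term whose near-cancellation with $\tau_{n-1}\log a_n$ is the decisive step, and the same control of the tail and of the error factors via the previous instance of (\ref{i1y}). The only difference is cosmetic: you work additively with logarithms where the paper manipulates the products $L_1$, $L_2$ and the prefactor $k_m$ directly.
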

%
%
\begin{proof}
First, assume that (\ref{aapprox}) holds for $3\leq n\leq m$. Equation (\ref{aapprox2}) follows for $3\leq n\leq m$ by a simple induction. Hence, for sufficiently large $m$, by (\ref{alphabeta}), (\ref{muineq}), (\ref{aapprox}) and (\ref{aapprox2}):
\begin{align*}
&\frac{a_m}{a_m^{\mu_m}} = a_m^{1-\mu_m} \leq \exp(e^m(1-\mu_m)) \leq \exp(-e^{m/2}); \\
&\frac{\alpha_m a_m}{a_m^{\beta_m}} \leq 3m^2 a_m^{1 - m^2/2} \leq \exp(-e^{m/2}); \\
&\frac{a_m^{\mu_{m,m-2}}}{a_{m+1}} \leq (a_m^{\mu_{m,m-2}})^{1 - \mu_{m,2} + \frac{2}{m}} \leq \exp(e^m(1 - \mu_{m,2} + \frac{2}{m})) \leq \exp(-e^{m/2}); \\
&\frac{\alpha_m a_m^{\beta_m}}{a_{m+1}} \leq 3m^2 a_m^{m^2 - m^3 + 2/m} \leq \exp(-e^{m/2}).
\end{align*}
It remains to prove (\ref{aapprox}). We can assume, by taking $N_1$ sufficiently large, that (\ref{aapprox}) holds for $3\leq n\leq m-1$, for some large $m$. We can assume also that $m$ is sufficiently large that (\ref{mainidentity}), (\ref{muthing}) and various other estimates used in the following hold. We need to prove that (\ref{aapprox}) holds for $n = m$. Now, by (\ref{thean}) and (\ref{taudef}),
\begin{align}
a_{m+1} &= \frac{(T_{m+1} + 2)}{T_{m+1}} \ b_m^{\kappa_m} \left(1 - \frac{b_m}{a_m}\right)^2 \frac{L_1}{\prod_{k=3}^{m-1} \left\{a_k^{2\alpha_k\beta_k} \prod_{l=0}^{k-2} a_k^{2 \mu_{k,l}} \right\}} \nonumber \\
        &= k_m \frac{a_m^{\kappa_m}}{a_{m-1}^{2 \alpha_{m-1}\beta_{m-1}} \prod_{l=1}^{m-3} a_{m-1}^{2 \mu_{m-1,l}}} \frac{L_1}{L_2} \nonumber \\
\label{theamthing}
        &= k_m \frac{a_m^{\kappa_m}}{a_{m-1}^{(m-1)^3\tau_{m-1}}} \frac{L_1}{L_2},
\end{align}
where, by (\ref{thebn}),
\begin{equation*}
k_m = \frac{(T_{m+1} + 2)}{T_{m+1}} \left(\frac{T_m}{T_m+ 2}\right)^{\kappa_m} \left(\frac{2}{T_m+2}\right)^2,
\end{equation*}
by (\ref{mainidentity}), 
\begin{align*}
\kappa_m = 1 + 2 \sum_{k=3}^{m-1} \alpha_k + \sum_{k=3}^{m-1} \sum_{l=0}^{k-2} 2 = m^3 + \tau_{m-1} = T_m,
\end{align*}
\begin{equation*}
L_1 = \prod_{k=3}^{m-1} \left\{\left(1 - \frac{a_k^{\beta_k}}{b_m}\right)^{2\alpha_k}  \prod_{l=0}^{k-2}\left(1-\frac{a_k^{\mu_{k,l}}}{b_m}\right)^2 \right\},
\end{equation*}
and, by (\ref{taudef}) again,
\begin{equation*}
L_2 = a_{m-1}^2 \prod_{k=3}^{m-2} \left\{a_k^{2 \alpha_k\beta_k} \prod_{l=0}^{k-2} a_k^{2 \mu_{k,l}}\right\} = a_{m-1}^2 \prod_{k=3}^{m-2} a_k^{2(1+\alpha_k\beta_k+\sum_{l=1}^{k-2} \mu_{k,l})} = a_{m-1}^2 \prod_{k=3}^{m-2} a_k^{2+k^3\tau_k}.
\end{equation*}
We now estimate the terms in this equality. Firstly, by (\ref{theTn}), and noting that $\left(T_m/(T_m+2)\right)^{T_m} > 1/e^2$, we obtain $$\frac{1}{8}m^{-6} < k_m < 8m^{-6}.$$

Secondly, by (\ref{aapprox}), with $n=m-1$, $$a_{m-1}^{(m-1)^3} \leq a_m^{(1 - \frac{2}{(m-1)^4})^{-1}} \leq a_m^{1+\frac{4}{(m-1)^4}},$$ and so, by (\ref{thetaun}), $$\frac{a_m^{\kappa_m}}{a_{m-1}^{(m-1)^3\tau_{m-1}}} \geq a_m^{\kappa_m - \tau_{m-1} - \frac{4\tau_{m-1}}{(m-1)^4}} = a_m^{m^3 - \frac{4\tau_{m-1}}{(m-1)^4}} \geq a_m^{m^3 - \frac{1}{m}}.$$ Similarly, by (\ref{thetaun}) and (\ref{aapprox}), $$\frac{a_m^{\kappa_m}}{a_{m-1}^{(m-1)^3\tau_{m-1}}} \leq a_m^{m^3 + \frac{1}{m}}.$$

Thirdly, we consider $L_1$. Noting (\ref{muthing}), and by (\ref{thebn}) and (\ref{i1y}), we see that the smallest term in this product is $$1 > \left(1-\frac{a_{m-1}^{\mu_{m-1,m-3}}}{b_m}\right)^2 > \left(1-\frac{2a_{m-1}^{\mu_{m-1,m-3}}}{a_m}\right)^2 > 1 - \exp(-e^{\frac{m}{4}}).$$ There are fewer than $m$ terms in $L_1$ of the form $(1 - a_{m-1}^p/b_m)^q, \ p \isreal, \ q \isnatural$, and so of comparable size to this term. The other terms in the product for $L_1$ tend to $1$ sufficiently quickly, by (\ref{aapprox}), that $1/2 < L_1 < 1$.

Finally, we consider $L_2$. Observe that, by (\ref{thetaun}) and (\ref{aapprox}), the largest term in this product is $$a_{m-1}^2 a_{m-2}^{2+(m-2)^3\tau_{m-2}} < a_{m-1}^2 a_{m-2}^{4(m-2)^4} <  a_{m-1}^2 a_{m-1}^{8(m-1)} < a_m^{16/m^2}.$$ By (\ref{aapprox}), all other terms in the product for $L_2$ decrease sufficiently quickly that $1~<~L_2~<~a_m^{32/m^2}$. \\

Thus, by (\ref{theamthing}), for sufficiently large $m$,$$a_{m+1} \geq \frac{1}{16}m^{-6} a_m^{m^3 - \frac{1}{m}-\frac{32}{m^2}} \geq a_m^{m^3 - \frac{2}{m}},$$ and similarly, $$a_{m+1} \leq 8m^{-6} a_m^{m^3 + \frac{1}{m}} \leq a_m^{m^3 + \frac{2}{m}}.$$ 
This completes the proof of Lemma~\ref{lem1}.
\end{proof}
%
%
\section{There are no multiply connected Fatou components}
\label{S2}
In this section we prove the following result.
\begin{lemma}
\label{lem2}
The {\tef} $f$ does not have multiply connected Fatou components.
\end{lemma}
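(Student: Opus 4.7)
The plan is to appeal to \cite[Theorem 1.2]{2011arXiv1109.1794B}, which (as noted in Section~\ref{Sdef}) implies that a {\tef} $h$ with $h(0)=0$ and nonzero zeros of moduli $r_0 < r_1 < r_2 < \cdots$ has no multiply connected Fatou components provided $\log r_{k+1}/\log r_{k} \to 1$ as $k\toinfty$. So the task reduces to enumerating the moduli of the nonzero zeros of $f$ in increasing order and verifying this limit.

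From (\ref{thefun}), the moduli of the nonzero zeros of $f$ are, for each $k\geq 3$, the numbers $a_k^{\mu_{k,l}}$ for $l=0,1,\ldots,k-2$, together with $a_k^{\beta_k}$ whenever $k \geq N_0$ (so that $\alpha_k \geq 1$). I would first verify that, for all sufficiently large $k$, these moduli are totally ordered block by block, with the $k$th block taking the form
\[
a_k < a_k^{\mu_k} < \cdots < a_k^{\mu_{k,l_0}} < a_k^{\beta_k} < a_k^{\mu_{k,l_0+1}} < \cdots < a_k^{\mu_{k,k-2}} < a_{k+1}
\]
for some $l_0 \in \{2, \ldots, k-4\}$. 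Indeed, within a block the exponents $\mu_{k,l}$ are strictly increasing in $l$; by (\ref{muthing}) the exponent $\beta_k$ lies strictly between $\mu_{k,2}$ and $\mu_{k,k-3}$, so it fits between two consecutive $\mu_{k,l}$; and between consecutive blocks, (\ref{i1y}) gives $a_k^{\mu_{k,k-2}} < a_{k+1}$.

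Next I would bound the ratio $\log r_{j+1}/\log r_{j}$ of consecutive elements of the sorted list in three regimes. (i) For consecutive moduli $a_k^{\mu_{k,l}}$ and $a_k^{\mu_{k,l+1}}$ within the same block, the ratio equals $\mu_{k,l+1}/\mu_{k,l} = \mu_k = k^{3/k}$, which tends to $1$. (ii) For the two ratios involving $a_k^{\beta_k}$, the quantities $\beta_k/\mu_{k,l_0}$ and $\mu_{k,l_0+1}/\beta_k$ both lie in $[1, \mu_k]$ and so tend to $1$. (iii) For the transition between the $k$th and $(k+1)$st blocks, the ratio $\log a_{k+1}/(\mu_{k,k-2} \log a_k)$ is, by (\ref{aapprox}) and the identity $\mu_{k,k-2} = k^{3 - 6/k}$, squeezed between $(k^3 - 2/k)/k^{3-6/k}$ and $(k^3 + 2/k)/k^{3-6/k}$; both bounds equal $k^{6/k}(1+o(1))$, so the ratio tends to $1$. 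Combining the three cases gives $\log r_{j+1}/\log r_j \to 1$, and the lemma follows from \cite[Theorem 1.2]{2011arXiv1109.1794B}.

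The main obstacle is the bookkeeping needed for the ordering: handling the finitely many small $k$ where $\alpha_k = 0$ or the estimates of Section~\ref{S1} have not yet taken effect, and ruling out (or absorbing) coincidences between $\beta_k$ and some $\mu_{k,l}$. Since the conclusion only depends on the tail of $(r_j)$, any such coincidences can be dealt with by merging repeated moduli in the sorted list without affecting the limit; the ratio estimates themselves then follow directly from Lemmas~\ref{lemx} and~\ref{lem1}.
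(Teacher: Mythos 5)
Your proposal is correct and follows essentially the same route as the paper: both rest on \cite[Theorem 1.2]{2011arXiv1109.1794B} together with the observation that the zeros of $f$ (which cannot lie in a multiply connected component because $0$ is a fixed point) are dense on a logarithmic scale, via (\ref{aapprox}) and the fact that $\mu_{n,2}+2/n \to 1$. The only difference is presentational: the paper argues directly with the annulus-covering form of that theorem and uses only the zeros of modulus $a_n^{\mu_{n,l}}$ and $a_{n+1}$, so your extra bookkeeping locating $a_k^{\beta_k}$ in the ordered list is harmless but unnecessary, since inserting additional zeros can only decrease the consecutive log-ratios.
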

We use the definition of an annulus $$A(r_1, r_2) = \{ z : r_1 < |z| < r_2 \}, \qfor 0 < r_1 < r_2.$$ We need the following, which is part of \cite[Theorem 1.2]{2011arXiv1109.1794B}.
\begin{thma}
\label{Ta}
Suppose that $g$ is a {\tef} with a multiply connected Fatou component $U$. For each $z_0 \in U$ there exists $\alpha > 0$ such that, for sufficiently large $n \isnatural$, $$g^n(U) \supset A(|g^n(z_0)|^{1-\alpha}, |g^n(z_0)|^{1+\alpha}).$$
\end{thma}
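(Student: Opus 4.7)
The plan is to argue by contradiction using Theorem A. If $U$ is a multiply connected Fatou component of $f$, then $U$ is wandering and, by Theorem A, the iterates $f^n(U)$ contain arbitrarily long ``logarithmic'' annuli around the origin. The strategy is to show that the zeros of $f$ are sufficiently dense on the logarithmic scale that any such annulus must eventually contain one; this forces a zero of $f$ into a multiply connected Fatou component, which I rule out directly.

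For the direct obstruction, suppose a Fatou component $U'$ contained a zero $z^*$ of $f$. Then $f(z^*)=0\in f(U')\subset F(f)$, so $0\in F(f)$; let $V$ be its Fatou component. Since $f(0)=0$, we have $f(V)\subset V$, and so the forward iterates $U'_n$ all coincide with $V$ for $n\geq 1$, contradicting the wandering of $U'$. In particular, no zero of $f$ lies in any multiply connected Fatou component.

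The central combinatorial step is to enumerate the nonzero moduli of zeros of $f$ in increasing order as $r_1<r_2<\cdots$ and verify that $\log r_{k+1}/\log r_k\to 1$ as $k\toinfty$. By (\ref{thefun}) this list consists of $a_k^{\mu_{k,l}}$ for $k\geq 3$, $0\leq l\leq k-2$, together with $a_k^{\beta_k}$ for $k\geq N_0$. Within a block of fixed $k$, the ratio of logs of consecutive entries from $\{a_k^{\mu_{k,l}}\}$ is $\mu_k=k^{3/k}\to 1$; by (\ref{muthing}), the extra zero $a_k^{\beta_k}$ is sandwiched between $a_k^{\mu_{k,2}}$ and $a_k^{\mu_{k,k-3}}$, so its log ratio with adjacent neighbours in the block is also bounded above by $\mu_k$. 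The jump between blocks is controlled by (\ref{aapprox}): since $\mu_{k,k-2}=k^{3-6/k}$ and $\log a_{k+1}/\log a_k\in[k^3-2/k,\,k^3+2/k]$, we find $\log a_{k+1}/\log(a_k^{\mu_{k,k-2}})=k^{6/k}(1+o(1))\to 1$.

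To finish, take $z_0\in U$. Since $U$ is multiply connected it is fast escaping, so $|f^n(z_0)|\toinfty$. By Theorem A there is $\alpha>0$ such that $f^n(U)\supset A(|f^n(z_0)|^{1-\alpha},|f^n(z_0)|^{1+\alpha})$ for all large $n$. If this annulus contained no $r_k$, there would exist $k$ with $r_k\leq|f^n(z_0)|^{1-\alpha}$ and $r_{k+1}\geq|f^n(z_0)|^{1+\alpha}$, giving $\log r_{k+1}/\log r_k\geq(1+\alpha)/(1-\alpha)>1$, which contradicts the preceding paragraph as $k\toinfty$. Hence the annulus contains a zero of $f$, placing it in the multiply connected component $U_n$ and contradicting the second paragraph. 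I expect the main obstacle to be the careful bookkeeping in the third paragraph, and in particular locating the auxiliary zero $a_k^{\beta_k}$ in the ordered list using only the loose bound (\ref{muthing}); the rest is a clean combination of Theorem A with the elementary logarithmic-spacing criterion.
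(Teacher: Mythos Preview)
Your proposal does not prove the stated result. Theorem~A is a quoted theorem from \cite{2011arXiv1109.1794B}; the paper does not prove it, and neither do you. What you have written is a proof of Lemma~\ref{lem2} (that $f$ has no multiply connected Fatou components), in which Theorem~A is invoked as a black box. If the intent was to prove Theorem~A itself, the proposal is simply off target: nothing in it addresses the dynamics of a general transcendental entire function with a multiply connected Fatou component.

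If instead the intended target was Lemma~\ref{lem2}, then your argument is correct and follows essentially the same route as the paper. Both proofs show that zeros of $f$ cannot lie in a multiply connected Fatou component (since $0$ is fixed and such components are wandering), and both derive a contradiction from Theorem~A by checking that the moduli of the zeros are asymptotically dense on the logarithmic scale. The only substantive differences are cosmetic: the paper phrases the density as ``every annulus $A(r,r^{\mu_{n,2}+2/n})$ with $a_n\le r\le a_{n+1}$ contains a zero'' and simply ignores the auxiliary zeros at $a_k^{\beta_k}$ (they are not needed for the density estimate), whereas you phrase it as $\log r_{k+1}/\log r_k\to 1$ and take the trouble to locate $a_k^{\beta_k}$ in the ordered list via (\ref{muthing}). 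Your version is slightly more explicit about the wandering-versus-fixed-point obstruction; the paper dispatches this in one clause.
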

\begin{proof}[Proof of Lemma~\ref{lem2}]
Observe that, for large $n$, in the closed annulus $\overline{A(a_n, a_{n+1})}$ there are zeros of $f$ on the negative real axis of modulus $a_n, a_n^{\mu_n}, a_n^{\mu_{n,2}}, \ldots, a_n^{\mu_{n,n-2}}$ and $a_{n+1}$. Note also that 0 is a fixed point of $f$, and so no zero of $f$ can be in a multiply connected Fatou component of $f$. Now, by (\ref{aapprox}),
\begin{equation*}
a_{n+1} \leq a_n^{n^3 + 2/n} < (a_n^{\mu_{n,n-2}})^{\mu_{n,2}+2/n}.
\end{equation*}
Hence, for large $n$, there is at least one zero of $f$ in any annulus $A(r, r^{\mu_{n,2}+2/n})$, for $a_n \leq r \leq a_{n+1}$. Note that $\mu_{n,2}+2/n \rightarrow 1$ as $n\rightarrow\infty$.

Now, by Theorem~A, if $f$ has a multiply connected Fatou component, then there is a $c>1$, and a sequence $(r_i)_{i\isnatural}$, tending to infinity, such that the annuli $A(r_i, r_i^c)$ are contained in multiply connected Fatou components of $f$.  This is in contradiction to the observations above regarding the distribution of zeros of $f$ and the fact that these zeros do not lie in multiply connected Fatou components. Hence there can be no multiply connected Fatou components of $f$.
\end{proof}
%
%
%
%
\section{There are simply connected Fatou components}
\label{S3}
Next we show that $f$ has simply connected Fatou components.
\begin{lemma}
\label{lem3}
Define $B_n = \{z : |z + b_n| < \delta_n b_n\},$ where $\delta_n = n^{-15}$. Then, for large $n$, we have $f(B_n) \subset B_{n+1}$, and $B_n$ is contained in a simply connected Fatou component of $f$.
\end{lemma}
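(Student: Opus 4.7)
My plan is to first establish $f(-b_n) \approx -b_{n+1}$ with negligible error, then show that $-b_n$ is approximately a critical point of $f$ so that $f$ maps $B_n$ to a disc much smaller than $B_{n+1}$, and finally deduce that $B_n$ lies in a simply connected Fatou component by combining the resulting invariance with Lemma~\ref{lem2}.

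To estimate $f(-b_n)$, I would split the product defining $f(z)$ into $R_n(z)$, arising from zeros of modulus less than $a_n$, and $S_n(z)$, arising from zeros of modulus at least $a_n$. Comparing with (\ref{thean}), which rearranges to $-b_{n+1} = -b_n(1-b_n/a_n)^2 R_n(-b_n)$, one finds
\begin{equation*}
\frac{f(-b_n)}{-b_{n+1}} = \frac{S_n(-b_n)}{(1-b_n/a_n)^2},
\end{equation*}
a product over zeros $-c$ with $c$ at least $a_n^{\mu_n}$, $a_n^{\beta_n}$, or $a_{n+1}$. By (\ref{i1}) and (\ref{i1y}), each such factor $(1-b_n/c)^m$ satisfies $|\log(1-b_n/c)^m| = O(\exp(-e^{n/2}))$, and summing over all factors (the $k>n$ contributions decay super-exponentially) yields $|f(-b_n)/(-b_{n+1}) - 1| \leq \exp(-e^{n/3})$ for large $n$.

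For $z = -b_n + w \in B_n$, I would expand $f(z)/f(-b_n)$ as a product of ratios according to the same grouping. The linear-in-$w$ coefficient of $\log(f(z)/f(-b_n))$ has three dominant contributions: $-1/b_n$ from the overall $z$-factor; $-(T_n-1)/b_n$ from $R_n$, using that every inner zero $-c$ satisfies $c \ll b_n$ by (\ref{i1y}) (so that $1/(c-b_n) \approx -1/b_n$) combined with the total inner multiplicity being $T_n - 1$ by (\ref{mainidentity}); and $2/(a_n-b_n) = (T_n+2)/a_n$ from the $(1+z/a_n)^2$ factor in $S_n$. These sum to $-T_n/b_n + (T_n+2)/a_n = 0$ exactly, because $b_n = T_n a_n/(T_n+2)$ by (\ref{thebn}). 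The surviving quadratic coefficient has magnitude $O((T_n/a_n)^2)$, so with $|w| \leq \delta_n b_n$ and $T_n \sim n^3$,
\begin{equation*}
\left|\frac{f(z)}{f(-b_n)} - 1\right| \leq C \frac{T_n^2 |w|^2}{a_n^2} + O(\exp(-e^{n/3})) \leq C' n^{-24}
\end{equation*}
for large $n$. Combined with the previous paragraph, the triangle inequality gives $|f(z) + b_{n+1}| < \delta_{n+1} b_{n+1}$ for large $n$, i.e., $f(z) \in B_{n+1}$.

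Iterating yields $f^k(B_n) \subset B_{n+k}$, so $\inf_{B_n}|f^k| \geq (1-\delta_{n+k}) b_{n+k} \to \infty$ by (\ref{aapprox2}); hence $f^k \to \infty$ uniformly on $B_n$, so $\{f^k\}$ is normal on $B_n$ and $B_n \subset F(f)$. By Lemma~\ref{lem2}, no Fatou component of $f$ is multiply connected, so the component containing $B_n$ is simply connected. The main technical obstacle will be making the linear cancellation rigorous: this requires careful bookkeeping for the higher-order corrections — the $c/b_n$ expansion terms in $R_n$, bounded via (\ref{i1y}), and the non-principal contributions from $S_n$, bounded via (\ref{i1}) — to confirm each is strictly smaller than the $O(n^{-24})$ quadratic bound, with identity (\ref{mainidentity}) playing the essential role in supplying the correct total multiplicity $T_n - 1$ of inner zeros.
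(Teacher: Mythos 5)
Your proposal is correct and follows essentially the same route as the paper: the recursion (\ref{thean})/(\ref{thebn}) identifies $f(-b_n)$ with $-b_{n+1}$ up to the outer factors, the cancellation coming from (\ref{mainidentity}) and $b_n = T_n a_n/(T_n+2)$ (i.e.\ $-b_n$ being nearly a critical point) kills the linear term, the quadratic term is bounded against $\delta_{n+1}=(n+1)^{-15}$, the outer factors are controlled by (\ref{i1}) and (\ref{i1y}), and simple connectivity follows from Lemma~\ref{lem2}. The only cosmetic differences are that the paper expands the inner product as a polynomial in the normalised variable $w=(z+b_n)/b_n$ with crude coefficient bounds $|\eta_k|<n^{7k}$ rather than expanding the logarithm, and invokes Montel's theorem rather than uniform convergence to $\infty$ to conclude normality.
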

%
%
\begin{proof}
Suppose that $z \in B_n$, in which case $z = -b_n + w b_n$ where $|w| < \delta_n$. We assume throughout this section that $n$ is sufficiently large for various estimates to hold. By (\ref{thefun}),  (\ref{thebn}) and (\ref{thean}), $$\frac{f(z)}{-b_{n+1}} = I_1 I_2,$$ where
\begin{align}
\label{I1def}
I_1 &= (1-w) \left(1 + w \frac{b_n}{a_n-b_n}\right)^2 \prod_{k=3}^{n-1} \left\{\left(1+w\frac{b_n}{a_k^{\beta_k}-b_n}\right)^{2\alpha_{k}} \prod_{l=0}^{k-2}\left(1+w\frac{b_n}{a_k^{\mu_{k,l}}-b_n}\right)^{2} \right\},
\end{align}
and
\begin{equation*}
I_2 = \prod_{l=1}^{n-2} \left(1 + \frac{z}{a_n^{\mu_{n,l}}}\right)^{2}\ \prod_{k=n}^\infty \left(1 + \frac{z}{a_k^{\beta_k}}\right)^{2\alpha_{k}} \prod_{k=n+1}^\infty \prod_{l=0}^{k-2} \left(1 + \frac{z}{a_k^{\mu_{k,l}}}\right)^{2}.
\end{equation*}
First consider $I_1$. This is a polynomial in $w$ of degree $T_n + 2$, by (\ref{mainidentity}). Write $$I_1 = 1 + \sum_{j=1}^{T_n+2} \eta_j w^j, \qfor \eta_j\iscomplex.$$ Then
\begin{equation}
\label{i1ineq}
|I_1 - 1| \leq |\eta_1 w| + |\eta_2 w^2| + \cdots + |\eta_{T_n+2} w^{T_n+2}|.
\end{equation}
We consider $\eta_1$. We have, by (\ref{thealphan}), (\ref{mainidentity}), (\ref{thebn}), (\ref{aapprox}), (\ref{aapprox2}) and (\ref{i1y}),
\begin{align}
|\eta_1| &= \left|1 + 2\frac{b_n}{b_n-a_n} + 2\sum_{k=3}^{n-1} \left\{\alpha_{k} \frac{b_n}{b_n-a_k^{\beta_k}} + \sum_{l=0}^{k-2} \frac{b_n}{b_n-a_k^{\mu_{k,l}}} \right\}\right| \nonumber \\
\label{simpleq}
           &= \left |1 - T_n + 2\sum_{k=3}^{n-1}\left\{\alpha_{k}\left(1 + \frac{a_k^{\beta_k}}{b_n-a_k^{\beta_k}}\right) + \sum_{l=0}^{k-2} \left(1 + \frac{a_k^{\mu_{k,l}}}{b_n-a_k^{\mu_{k,l}}}\right) \right\}\right| \\
           &= 2\left |\sum_{k=3}^{n-1} \left\{\alpha_{k}\frac{a_k^{\beta_k}}{b_n-a_k^{\beta_k}} + \sum_{l=0}^{k-2} \frac{a_k^{\mu_{k,l}}}{b_n-a_k^{\mu_{k,l}}}\right\}\right|  \nonumber \\
           &\leq 4\left|n\alpha_{n-1}\frac{a_{n-1}^{\beta_{n-1}}}{a_n} + n^2 \frac{a_{n-1}^{\mu_{n-1,n-3}}}{a_n} \right|  \nonumber \\
           &\leq \exp(-e^{n/4}). \nonumber
\end{align}    
Note that the cancellation in (\ref{simpleq}) occurs because, due to the choice of $b_n$ and $T_n$, $-b_n$ is very close to a critical point of $f$.
       
Next consider $\eta_k$, for $k>1$. Note that the coefficients of $w$ in the product for $I_1$ have modulus at most $b_n/(a_n - b_n) = T_n/2 < n^3$. Moreover, the degree of $I_1$ is less than $2n^3$, and so the expansion of (\ref{I1def}) contains less than $(2n^3)^k$ terms in $w^k$. Hence $$|\eta_k| < (n^3)^k.(2n^3)^k < n^{7k},\qfor\text{large }n.$$ Hence $|\eta_2 w^2| < n^{-16}$. The other terms in (\ref{i1ineq}) decrease sufficiently quickly that 
\begin{equation}
\label{i1eq}
|I_1 - 1| < 2n^{-16}, \qfor \text{large } n.
\end{equation}
Now we consider $I_2$. Observe that each term in the product for $I_2$ has modulus less than 1. Hence, since $-\log(1-x) \leq \log (1+2x)$, for $0<x<1/2$, we have, by (\ref{logeq}), (\ref{aapprox}), (\ref{aapprox2}) and (\ref{i1}),
\begin{align*}
0 < -\log |I_2| &\leq 2\left(\sum_{l=1}^{n-2} \log \left(1+\frac{2|z|}{a_n^{\mu_{n,l}}}\right) + \sum_{k=n}^\infty \alpha_{k} \log \left(1 + \frac{2|z|}{a_k^{\beta_k}}\right)+ \sum_{k=n+1}^\infty \sum_{l=0}^{k-2} \log \left(1 + \frac{2|z|}{a_k^{\mu_{k,l}}}\right)\right) \\
             &\leq 8\left(\sum_{l=1}^{n-2} \frac{a_n}{a_n^{\mu_{n,l}}} + \sum_{k=n}^\infty \alpha_{k} \frac{a_n}{a_k^{\beta_k}} + \sum_{k=n+1}^\infty \sum_{l=0}^{k-2} \frac{a_n}{a_k^{\mu_{k,l}}} \right) \\
             &\leq 16\left(\exp(-e^{n/2}) + \exp(-e^{n/2}) + \exp(-e^{n/2})\right) \\
             &\leq \exp(-e^{n/4}).
\end{align*}
Thus, $1 - 2\exp(-e^{n/4}) \leq |I_2| \leq 1.$ This, together with (\ref{i1eq}), establishes the first part of the lemma. It follows from Montel's theorem that, for large $n$, $B_n$ is contained in a Fatou component, which must be simply connected by Lemma~\ref{lem2}.
\end{proof}
\itshape Remark. \normalfont Let $V_n$ be the Fatou component containing $B_n$. These Fatou components are distinct. For, suppose that $V_m = V_n$ with $m \ne n$. Because all the coefficients of $z$ in (\ref{thefun}) are real, the Fatou set $F(f)$ must be invariant under reflection in the real axis. Hence, all points on the negative real axis between $B_n$ and $B_m$ must be in $V_m$, as otherwise $V_m$ would be multiply connected. This is a contradiction since these points include the zeros of $f$.
%
%
\section{The simply connected Fatou components are fast escaping}
\label{S4}
In this section we first prove Theorem~\ref{T2}, and then we use this result to prove the following.
\begin{lemma}
\label{lem5}
Let $V_n$, $n\isnatural$, be the simply connected Fatou components defined at the end of Section~\ref{S3}. Then $V_n \subset A(f)$, for large $n$.
\end{lemma}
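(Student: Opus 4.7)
The plan is to apply Theorem~\ref{T2} to an arbitrary $z \in B_{n_0}$ for sufficiently large $n_0$, using Lemma~\ref{lem3} to bound $|f^k(z)|$ from below and an appropriate choice of $\epsilon$ to bound $\eta^n(R')$ from above. Since all zeros of $f$ lie on the non-positive real axis, the Taylor series of $f$ has non-negative coefficients, so $M(r,f) = f(r)$ for $r > 0$. This reduces the task to a careful comparison between $|f(-b_n)|$ (which, by the proof of Lemma~\ref{lem3}, differs from $b_{n+1}$ only by a factor in $[1/2,1]$) and $f(b_n)$.

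The main technical estimate is that $b_{n+1}/M(b_n, f) \geq c/n^6$ for some $c > 0$ and all large $n$. To prove it I would expand $|f(-b_n)|/f(b_n)$ as a product of terms $\bigl(|1 - b_n/a|/(1 + b_n/a)\bigr)^p$, one for each zero $-a$ of $f$ of multiplicity $p$. The factor corresponding to $a = a_n$ (i.e.\ $k = n$, $l = 0$) equals $(T_n+1)^{-2}$, which by (\ref{theTn}) is $\Theta(n^{-6})$. Every other factor is very close to $1$: for zeros with $a < b_n$ (those with $k \leq n-1$) the individual factor equals $(1 - a/b_n)/(1 + a/b_n) = 1 - O(a/b_n)$, and $\sum p \cdot a/b_n$ over all such zeros is negligible by (\ref{i1y}) and (\ref{aapprox}); for zeros with $a > b_n$ (those with $k = n$, $l \geq 1$, or $k \geq n+1$) the factor is $1 - O(b_n/a)$, which is summable using (\ref{i1}) and (\ref{aapprox}). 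This delicate ratio computation is the main obstacle.

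To apply Theorem~\ref{T2}, I would set $\epsilon(r) = c'/m^{12}$ for $r \in [b_m, b_{m+1})$ with $m \geq N_0$, where $c' = c/100$ and $N_0$ is large, extended to a nonincreasing function on $[R_0, \infty)$. A short induction on $k$, using $M(b_j) \leq (j^6/c) b_{j+1}$ together with the rapid growth of $b_j$ from (\ref{aapprox}) (so that $b_{j+2}/b_{j+1}$ dwarfs any polynomial in $j$), gives $M^k(b_m) \in [b_{m+k}, b_{m+2k})$. The constraint (\ref{T2constraint}) then reduces to $c'/(m + 2k)^{12} \geq (c'/m^{12})^{k+1}$, a routine inequality for $m \geq N_0$ and $k \geq 0$. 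Taking $z \in B_{n_0}$, $R' = b_{n_0+1}/2$, and $\ell = 1$ in Theorem~\ref{T2}, I would then prove by induction on $n$ that $\eta^n(R') \leq b_{n_0+n+1}/2$; the factor $1/(100 m^{12})$ in $\epsilon$ is more than enough to absorb the index drift at each step, because $\eta(r) \leq (c'/m^{12}) M(b_{m+1}) \leq b_{m+2}/(50\, m^6)$ on each interval $[b_m, b_{m+1})$. Combined with the lower bound $|f^{n+1}(z)| \geq (1 - \delta_{n_0+n+1}) b_{n_0+n+1} \geq b_{n_0+n+1}/2$ from Lemma~\ref{lem3}, this yields $|f^{n+1}(z)| \geq \eta^n(R')$ for every $n$, so $z \in A(f)$ by Theorem~\ref{T2}. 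Since \cite[Theorem 1.2]{fast} implies $V_{n_0} \subset A(f)$ whenever $V_{n_0} \cap A(f) \neq \emptyset$, we conclude $V_{n_0} \subset A(f)$ for all large $n_0$.
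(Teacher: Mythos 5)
Your proposal is correct and follows essentially the same route as the paper: the same decomposition of $|f(-b_n)|/f(b_n)$ into the dominant factor at $a_n$ (of size $(T_n+1)^{-2}\asymp n^{-6}$) times near-unit factors controlled by (\ref{i1}), (\ref{i1y}) and (\ref{aapprox}), the same choice of a nonincreasing step function $\epsilon(r)\asymp m^{-C}$, the same verification of (\ref{T2constraint}) via the fact that one application of $M$ advances the index by at most $2$, and the same appeal to Theorem~\ref{T2} and \cite[Theorem 1.2]{fast}. The only differences are cosmetic: you index $\epsilon$ by $[b_m,b_{m+1})$ with exponent $12$ rather than the paper's $(a_{m-1}(1+\delta_{m-1}),a_m(1+\delta_m)]$ with exponent $6$, and you arrange the final comparison as an explicit upper bound on $\eta^n(R')$ against the lower bound from Lemma~\ref{lem3}, rather than directly asserting $|f^m(x')|\geq\eta^m(x)$.
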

%
%
\begin{proof}[Proof of Theorem~\ref{T2}]
%
%
We need two facts about the maximum modulus function, $M(\rho)$, of a {\tef}. The first is well known, and the second is from \cite[Lemma 2.2]{MR2544754}:
\begin{equation}
\label{M2}
\frac{\log M(\rho)}{\log \rho} \rightarrow\infty \text{ as } \rho\rightarrow\infty,
\end{equation}
and there exists $R > 0$ such that
\begin{equation}
\label{M4}
M(\rho^c) \geq M(\rho)^c, \qfor \rho \geq R, \ c>1.
\end{equation}
%
%
Fix $r_0 \geq R_0$ such that $M(r) > r$, for $r\geq r_0$. Whenever $r \geq r_0$ there is a unique $n\isnatural$ such that $M^{n-1}(r_0) \leq r < M^n(r_0)$. Hence, since $\epsilon$ is nonincreasing, by (\ref{T2constraint}) and  (\ref{M2})
\begin{equation*}
\epsilon(r) r \geq \epsilon(M^n(r_0))M^{n-1}(r_0) \geq \epsilon(r_0)^{n+1}M^{n-1}(r_0)\toinfty \text{ as } n\toinfty.
\end{equation*}
Hence
\begin{equation}
\label{e2}
\epsilon(r) r \toinfty \text{ as } r \toinfty.
\end{equation}
By (\ref{M2}) and (\ref{e2}) we see that, given $k~>~0$, we can ensure that
\begin{equation}
\label{einf}
\frac{\log M(\epsilon(r) R)}{\log (\epsilon(r) R)} > k, \qfor \text{large } r, \ R \geq r.
\end{equation}
A little algebra shows that this is equivalent to
\begin{equation}
\label{e4}
M(\epsilon(r) R)^{\frac{-\log\epsilon(r)}{\log (\epsilon(r) R)}} > \epsilon(r)^{-k}, \qfor \text{large } r, \ R \geq r.
\end{equation}
In (\ref{M4}) we replace $c$ with $\log R / \log(\epsilon(r) R)$, and replace $\rho$ with $\epsilon(r) R$. We obtain, using (\ref{e4}) with $k=3$, that there exists $R_1 \geq R_0$ such that
\begin{equation}
\label{Ae1}
M(R) \geq \epsilon(r)^{-3} M(\epsilon(r) R), \qfor R \geq r \geq R_1.
\end{equation}
Note that we can assume that $R_1$ is sufficiently large that $M(r) > r$, for $r \geq R_1/\epsilon(r)$, and also, by  (\ref{e2}), that $\eta(r) > r$, for $r \geq R_1$. \\

We claim next that we have
\begin{equation}
\label{eind}
\eta^k(r) \geq \epsilon(r)^{-k-1} M^k(\epsilon(r)r) > M^k(\epsilon(r)r), \qfor r \geq R_1, \ k\isnatural.
\end{equation}
This can be seen by induction. When $k = 1$ we have, by (\ref{Ae1}),
\begin{equation*}
\eta(r) = \epsilon(r) M(r) \geq \epsilon(r)^{-2} M(\epsilon(r) r), \qfor r \geq R_1.
\end{equation*} 
Hence, by induction, for $r \geq R_1$,
\begin{align*}
\eta^{k+1}(r) 
              &=    \epsilon(\eta^k(r)) M(\eta^k(r)) \\        
              &\geq \epsilon(M^k(r)) M(\eta^k(r)) \ &\text{as } \epsilon \text{ is nonincreasing} \\             
							&\geq \epsilon(r)^{k+1} M(\eta^k(r))     \ &\text{by } (\ref{T2constraint}) \\
              &\geq \epsilon(r)^{k+1} M(\epsilon(r)^{-k-1} M^k(\epsilon(r)r))      \ &\text{by } (\ref{eind}) \\             					&\geq \epsilon(r)^{k+1} \epsilon(r)^{-3k-3} M^{k+1}(\epsilon(r)r)   \ &\text{by repeated use of } (\ref{Ae1}) \\
							&\geq \epsilon(r)^{-(k+1)-1} M^{k+1}(\epsilon(r)r)									\ &\text{as required.}						
\end{align*}
(Note that in the penultimate step we have also used the fact that $\epsilon(r) r \leq M(\epsilon(r)r)$, for $r\geq R_1$.)

It follows from (\ref{eind}) that, for $r \geq R_1$, $\eta^n(r)\toinfty$ as $n\toinfty$. Define
\begin{equation*}
A'(f) = \{z : \text{ there exists } \ell \isnatural \text{ such that } |f^{n+\ell}(z)|\geq \eta^n(R') \text{ for } n\isnatural \},
\end{equation*}
for $R' \geq R_1$. We complete the proof by showing that $A'(f) = A(f)$. 

First, suppose that $z \in A(f)$, in which case for some $\ell\isnatural$ we have $$|f^{n+\ell}(z)| \geq M^n(R), \qfor n\isnatural,$$ and some $R$ with $M(r) > r$, for $r \geq R$. Choose $K\isnatural$ such that $M^K(R)~=~R'~\geq~R_1$. Then
\begin{equation*}
|f^{n+\ell+K}(z)| \geq M^{n+K}(R) = M^n(R') \geq \eta^n(R'), \qfor n\isnatural.
\end{equation*}
Hence $z \in A'(f)$.

Conversely, suppose that $z \in A'(f)$, in which case for some $\ell\isnatural$ and $R'\geq R_1$ we have $$|f^{n+\ell}(z)| \geq \eta^n(R'), \qfor n\isnatural.$$ Choose $K\isnatural$ so that $M^K(\epsilon(R')R') = R \geq R_1$. Then, by (\ref{eind}).
\begin{equation*}
|f^{n+\ell+K}(z)| \geq \eta^{n+K}(R') \geq M^{n+K}(\epsilon(R') R') \geq M^n(R), \qfor n\isnatural.
\end{equation*}
Hence $z \in A(f)$.
\end{proof}
Finally, we give the 
%
%
\begin{proof}[Proof of Lemma~\ref{lem5}]
For some large $R_0$ define, for $r > R_0$,
\begin{equation}
\epsilon(r) = \frac{1}{16n^6}, \qfor a_{n-1}(1 + \delta_{n-1}) < r \leq a_n(1 + \delta_n),
\end{equation}
where $\delta_n = n^{-15}$ as in Lemma~\ref{lem3}. Define also $\eta(r) = \epsilon(r) M(r)$, for $r \geq R_0$. \\

Suppose that $x' \in B_n \cap \mathbb{R} \subset V_n$, for some $n$. We can assume that $n$ is chosen sufficiently large for various estimates in this section to hold. We claim that $x'\in A(f)$. Thus, by \cite[Theorem 1.2]{fast}, $V_n \subset A(f)$. \\

Our approach to proving this claim is as follows. Set $x = -x'$, recalling that $x > 0$. We first show that
\begin{equation}
\label{isinAeq}
|f(x')| \geq \frac{1}{16n^6} M(x).
\end{equation}
It follows from this, and since $f(x') \in B_{n+1} \cap \mathbb{R}$, that 
\begin{equation}
\label{isinA}
|f^m(x')| \geq \eta^m(x), \qfor m\isnatural.
\end{equation}
Second, we show that $\epsilon$ satisfies (\ref{T2constraint}). Thus, by (\ref{isinA}) and Theorem~\ref{T2}, $x'\in A(f)$, as required. \\

First we need to establish (\ref{isinAeq}). Since $M(x) = f(x)$, we have
\begin{align*}
\frac{|f(x')|}{M(x)} = J_1 J_2 J_3,
\end{align*}
where
\begin{align*}
J_1 &= \prod_{k=3}^{n-1} \left\{\left(\frac{a_k^{\beta_k} - x}{a_k^{\beta_k} + x}\right)^{2\alpha_k} \ \sum_{l=0}^{k-2} \left(\frac{a_k^{\mu_{k,l}} - x}{a_k^{\mu_{k,l}} + x}\right)^2 \right\}, \\
J_2 &= \left(\frac{a_n - x}{a_n + x}\right)^2, \\
\text{and } \\
J_3 &= \prod_{k=n+1}^\infty \left\{\left(\frac{a_k^{\beta_k} - x}{a_k^{\beta_k} + x}\right)^{2\alpha_k} \ \sum_{l=0}^{k-2} \left(\frac{a_k^{\mu_{k,l}} - x}{a_k^{\mu_{k,l}} + x}\right)^2 \right\} \left(\frac{a_n^{\beta_n} - x}{a_n^{\beta_n} + x}\right)^{2\alpha_n} \sum_{l=1}^{n-2} \left(\frac{a_n^{\mu_{n,l}} - x}{a_n^{\mu_{n,l}} + x}\right)^2.
\end{align*}

We consider these three terms separately. By (\ref{muthing}) and (\ref{i1y}), the smallest term in $J_1$ is
\begin{equation*}
\left(\frac{a_{n-1}^{\mu_{n-1,n-3}} - x}{a_{n-1}^{\mu_{n-1,n-3}} + x}\right)^2  = \left(\frac{1 - \frac{a_{n-1}^{\mu_{n-1,n-3}}}{x}}{1 + \frac{a_{n-1}^{\mu_{n-1,n-3}}}{x}}\right)^2  \geq \left(1 - \frac{8a_{n-1}^{\mu_{n-1,n-3}}}{a_n}\right) \geq \left(1-\exp(-e^{n/4})\right).
\end{equation*}
Thus, recalling the estimates of Lemma~\ref{lem1}, $J_1 \geq \frac{1}{2}$. \\

Secondly, recalling that $x = b_n + \omega b_n$, with $|\omega| < \delta_n = n^{-15}$, we have for large $n$,
\begin{align*}
J_2 &= \left(\frac{a_n - b_n - \omega b_n}{a_n + b_n + \omega b_n}\right)^2 = \left(\frac{2 - \omega T_n}{2T_n + 2 + \omega T_n}\right)^2 
\geq \frac{1}{4n^6}.
\end{align*}

Thirdly, by (\ref{muthing}) and (\ref{i1}), the smallest term in $J_3$ is
\begin{equation*}
\left(\frac{a_{n}^{\mu_n} - x}{a_{n}^{\mu_n} + x}\right)^2 \geq \left(\frac{1 - \frac{x}{a_{n}^{\mu_n}}}{1 + \frac{x}{a_{n}^{\mu_n}}}\right)^2 \geq \left(1 - \frac{8a_n}{a_n^{\mu_n}}\right) \geq \left(1-\exp(-e^{n/4})\right).
\end{equation*}
Thus, recalling again the estimates of Lemma~\ref{lem1}, $J_3 \geq \frac{1}{2}$. This establishes our first claim. \\

Finally, we need to show that $\epsilon$ satisfies (\ref{T2constraint}). We claim that $M(a_n(1+\delta_n)) \leq a_{n+2}$. In a very similar way to the proof of (\ref{isinAeq}), we can split $f(2b_n)/a_{n+1}$ into three terms, and show that $f(2b_n)/a_{n+1} < 2^{12n^3}$. Hence, by (\ref{thebn}), (\ref{aapprox}) and (\ref{aapprox2}),
\begin{equation}
\label{Mineq}
M(a_n(1+\delta_n)) \leq M(2b_n) = f(2b_n) < 2^{12n^3} a_{n+1} \leq a_{n+1}^2 \leq a_{n+2},
\end{equation}
as required.

Suppose then that $r$ is such that $a_{n-1}(1 + \delta_{n-1}) < r \leq a_n(1 + \delta_n)$. Since $\epsilon$ is nonincreasing, we deduce that, for $k\isnatural$,
\begin{align*}
\epsilon(M^k(r)) &\geq \epsilon(M^k(a_n(1 + \delta_n))), \\
                 &\geq \epsilon(a_{n+2k}),               &\text{by (\ref{Mineq})}\\
                 &= \frac{1}{16(n+2k)^6} \geq \frac{1}{(16n^6)^{k+1}} = \epsilon(r)^{k+1}.
\end{align*}
Thus $\epsilon$ satisfies (\ref{T2constraint}). This completes the proof of Lemma~\ref{lem5} and hence the proof of Theorem~\ref{T1}. 
\end{proof}
\ack{The author is grateful to Gwyneth Stallard and Phil Rippon for their help with this paper.}
%
%
\bibliographystyle{acm}
\bibliography{Main}
\end{document}